%
%
%
%
\documentclass{amsart}
\addtolength{\textheight}{.4cm}
\newtheorem{theorem}{Theorem}[section]
\newtheorem{lemma}[theorem]{Lemma}
\theoremstyle{definition}

\newtheorem{pro}[theorem]{Proposition}
\newtheorem{remark}[theorem]{Remark}
\newtheorem{cor}[theorem]{Corollary}
\numberwithin{equation}{section}



\begin{document}

\title{sums of two biquadrates and elliptic curves of rank $\geq 4$}

\author{F.A. Izadi}
\address{Mathematics Department Azarbaijan university of  Tarbiat Moallem ,
Tabriz, Iran}
\email{f.izadi@utoronto.ca}

\author{F. Khoshnam}
\address{Mathematics Department Azarbaijan university of  Tarbiat Moallem ,
 Tabriz, Iran}
\email{khoshnam@azaruniv.edu}
\author{K. Nabardi}
\address{Mathematics Department Azarbaijan university of  Tarbiat Moallem ,
 Tabriz, Iran}
\email{nabardi@azaruniv.edu}
\subjclass[2000]{Primary 11G05; Secondary 10B10}

\date{Februry 25, 2012.}


\keywords{elliptic curves, rank, biquadrates, sums of two biquadrates, parity conjecture}

\begin{abstract}
If an integer $n$ is written as a sum of two biquadrates in two different ways, then the elliptic curve $y^2=x^3-nx$
has rank $\geq 3$. If moreover $n$ is odd and the parity conjecture is true, then it has even rank
  $\geq 4$.
Finally, some examples of ranks equal to  $4$, $5$, $6$, $7$, $8$ and $10$, are also obtained.
\end{abstract}
\maketitle



\section{Introduction}

Let $E$ be an elliptic curve over $\Bbb{Q}$ defined by the Weierstrass equation of the form
\begin{equation}
E: y^2=x^3+ax+b \quad\    a,b \in \Bbb{Q}.
\end{equation}
In order the curve $(1.1)$ to be an elliptic curve, it must be smooth. This in turn is equivalent to requiring that the cubic on the right of Eq. $(1.1)$ have no multiple roots. This holds if and only if the $discriminant$ of $x^3+ax+b$, i.e., $\Delta=-16(4a^3+27b^2)$ is non-zero.\\
By the Mordell-Weil theorem, the set of rational points on $E$ i.e., $E(\Bbb{Q})$ is a finitely generated abelian group, i.e.,
$$E(\Bbb{Q})\simeq E(\Bbb{Q})_{{\rm{tors}}}\oplus \Bbb{Z}^r,$$ where $E(\Bbb{Q})_{{\rm{tors}}}$ is a finite group called the torsion group and $r$ is a non-negative integer called the Mordell-Weil rank of $E(\Bbb{Q})$.
In this paper, we consider the family of elliptic curves defined by
$$E_n : y^2=x^3-nx,$$ for positive integers $n$ written as  sums of two biquadrates in two
different ways, i.e., $$n=p^4+q^4=r^4+s^4,$$ where the greatest common divisor of all the numbers $p, q, r, s$ is one.
Such a solution is referred to as a primitive solution. In what follows we deal only with numbers $n$ having primitive solution. This Diophantine equation was first proposed by Euler \cite{Euler} in 1772 and has since aroused the interest of numerous mathematicians. Among quartic Diophantine equations it has a distinct feature for its simple structure, the almost perfect symmetry between the variables and the close relationship with the theory of elliptic functions. The latter is demonstrated by the fact that this equation is satisfied by the four elliptic theta functions of Jacobi, $\vartheta_1, \vartheta_2, \vartheta_3, \vartheta_4$, in that order \cite{Wh}. Here in this note, we show that it also has an obvious relationship with the theory of elliptic curves. To this end, we need some
parametric solutions of the equation for which we use the one that was constructed by Euler as:
 \begin{equation}
\left\{
  \begin{array}{ll}
   p=a^7+a^5b^2-2a^3b^4+3a^2b^5+ab^6, \\
   q=a^6b-3a^5b^2-2a^4b^3+a^2b^5+b^7,\\
   r=a^7+a^5b^2-2a^3b^4-3a^2b^5+ab^6,\\
   s=a^6b+3a^5b^2-2a^4b^3+a^2b^5+b^7.\\
  \end{array}
\right.
\end{equation}
(See Hardy and Wright \cite{Ha} page 201, problem no.(13.7.11)). It is easy to see that the two different integers $n_1$ and $n_2$ having primitive solutions are independent modulo $\Bbb{Q}^{\ast 4}.$ For let $n_1$ and $n_2$ be two such numbers in which $(p_1, q_1, r_1, s_1)$ is the solution for $n_1$ and  $n_2=k^4n_1$ for non-zero rational number $k$. It follows that $(kp_1, kq_1, kr_1, ks_1)$ is a solution for $n_2$ which is not primitive.
We see that this condition is sufficient for the curves $E_{n_1}$ and $E_{n_2}$
to be non-isomorphic over $\Bbb{Q}$ (the dependence modulo $\Bbb{Q}^{\ast k}$ for $k=0, 1, 2, 3$ expresses one curve as the quartic twists of the other). However, it is not plain that there are infinitely many integers having primitive solution. To remedy this difficulty, Choudhry \cite{ch} presented a method of deriving new primitive solutions starting from a given primitive solution. This makes it possible to construct infinitely many non-isomorphic elliptic curves using the primitive solutions of the biquadrate equation.
Our main results are the following:

\begin{theorem}
If an integer $n$ is written as a sum of two biquadrates in two different ways, then the elliptic curve $y^2=x^3-nx$
has rank $\geq 3$. If moreover $n$ is odd and the parity conjecture is true, then it has even rank
$\geq 4$.
\end{theorem}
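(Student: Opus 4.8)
The plan is to read explicit rational points off the two representations and then to prove that three of them are independent. First I would observe that for \emph{any} representation $n=p^{4}+q^{4}$ the substitution $x=-p^{2}$ gives
\begin{equation*}
x^{3}-nx=-p^{6}+(p^{4}+q^{4})p^{2}=p^{2}q^{4}=(pq^{2})^{2},
\end{equation*}
so that $P_{1}=(-p^{2},\,pq^{2})$ and, symmetrically, $P_{2}=(-q^{2},\,p^{2}q)$ lie on $E_{n}(\mathbb{Q})$; the second representation $n=r^{4}+s^{4}$ likewise yields $P_{3}=(-r^{2},\,rs^{2})$ and $P_{4}=(-s^{2},\,r^{2}s)$. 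I would next pin down the torsion: by Fermat's theorem that $p^{4}+q^{4}$ is never a perfect square (for $pq\neq 0$), the curve $E_{n}$ has no rational point $(\sqrt{n},0)$, and since $-n<0$ there is no $4$-torsion either, so $E_{n}(\mathbb{Q})_{\mathrm{tors}}=\{O,(0,0)\}\cong\mathbb{Z}/2\mathbb{Z}$. Hence ``rank $\geq 3$'' is exactly the assertion that three of the $P_{i}$ are independent modulo $(0,0)$.

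The core step is this independence, say of $P_{1},P_{2},P_{3}$. I would first note that descent via the $2$-isogeny $E_{n}\to E_{n}'\colon Y^{2}=X^{3}+4nX$ is too coarse to see it: every $P_{i}$ has $x$-coordinate equal to minus a square, so its image under $\alpha(x,y)=x\bmod\mathbb{Q}^{*2}$ is $-1$, while $\phi(P_{i})$ has $X$-coordinate $q^{4}/p^{2}=(q^{2}/p)^{2}$, a square; thus both connecting maps are blind to these points. I would therefore argue through the canonical height pairing, where independence of $P_{1},P_{2},P_{3}$ is equivalent to nonvanishing of the regulator $\det\big(\langle P_{i},P_{j}\rangle\big)_{1\le i,j\le 3}$. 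Using Euler's parametrization $(1.3)$, the $P_{i}$ and $n$ become explicit functions of $(a,b)$, so it suffices to establish independence over the function field; and for that it is enough to exhibit one numerical specialization where the $3\times 3$ height matrix is nonsingular — for instance Euler's value $n=635318657=59^{4}+158^{4}=133^{4}+134^{4}$ — after which Silverman's specialization theorem transfers independence to all but finitely many members of the family. This independence is the main obstacle: the descent gives nothing, a symbolic height computation over $\mathbb{Q}(a,b)$ is forbidding, and the specialization argument is what makes it tractable (the residual finitely many exceptional parameters being checkable by hand).

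Finally, for the conditional statement I would view $E_{n}$ as a quartic twist of $y^{2}=x^{3}-x$, so that its global root number $W(E_{n})$ is governed by the classical root-number formula for this CM family. When $n$ is odd, each fourth power is $\equiv 0$ or $1\pmod{16}$, forcing $n=p^{4}+q^{4}\equiv 1\pmod{16}$; feeding this congruence together with $n>0$ into the formula should give $W(E_{n})=+1$. Under the parity conjecture $W(E_{n})=(-1)^{\operatorname{rank}E_{n}}$, so the rank is even, and combined with the unconditional bound $\operatorname{rank}E_{n}\geq 3$ this forces $\operatorname{rank}E_{n}\geq 4$, which is exactly the asserted gain from $3$ to $4$ without having to produce a fourth independent point explicitly.
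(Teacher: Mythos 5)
Your preliminary steps are fine (the points $P_i$, the torsion group, and the computation showing each $P_i$ maps to $-1$ under the connecting homomorphism), but the engine of your unconditional part --- canonical heights plus Silverman's specialization theorem --- proves strictly less than the theorem asserts, and this is a genuine gap. Granting the reduction of Euler's parametrization to one parameter $t=a/b$, a single good numerical specialization does prove the points independent over $\mathbb{Q}(t)$, and Silverman's theorem then makes specialization injective for \emph{all but finitely many} $t\in\mathbb{Q}$. But Theorem 1.1 is a statement about \emph{every} integer $n$ with two primitive representations. Your plan founders on the exceptional set: Silverman's theorem, as usually stated, is ineffective, so you do not know which parameters are exceptional and cannot ``check them by hand''; and even with an explicit list, at an exceptional $t$ your three points may genuinely become dependent --- note that rank $\geq 3$ is \emph{implied by}, not ``exactly'', the independence of three chosen points, so you would need a fresh argument there. (What you have sketched is essentially the function-field result the paper attributes to Aguirre and Peral in Remark 2.2; it does not by itself give the theorem.) The paper avoids any exceptional set by staying inside the descent you discarded as too coarse: although the descent maps are blind to the points $P_i$, the group $\phi(G)$ can be bounded below without ever exhibiting independent points. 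Concretely, Euler's parametrization factors $n=ABCD$ into four explicit polynomials in $a,b$; setting $b_1=BD$ and $b_2=-AC$ (so $n=-b_1b_2$), the homogeneous space $N^2=b_1M^4+b_2e^4$ has the rational solution $M=1$, $e=b$, because $BD-b^4AC$ is the square of a polynomial in $a,b$. Hence $b_1\in\phi(G)$, and together with $\{1,-1,n,-n\}\subseteq\phi(G)$ (from one representation $n=p^4+q^4$), the inclusion $\{1,2,n,2n\}\subseteq\psi(\overline{G})$ on the dual curve $y^2=x^3+4nx$ (from $N^2=2M^4+2ne^4$, solved by $M=p+q$, $N=2(p^2+pq+q^2)$), and a lemma showing all these classes are distinct modulo $\mathbb{Q}^{\ast 2}$, the Silverman--Tate formula $2^{r+2}=|\phi(G)|\,|\psi(\overline{G})|\geq 8\cdot 4$ yields $r\geq 3$ uniformly, for every member of the family at once.

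The parity half of your sketch also has a hole. The explicit form of the root number used here (Ono--Ono) is $\omega(E_n)=\mathrm{sgn}(-n)\cdot\epsilon(n)\cdot\prod_{p^2\parallel n}\left(\frac{-1}{p}\right)$, so the congruence $n\equiv 1\pmod{16}$ and $n>0$ are not enough: you must control the square divisors of $n$, which your ``should give $W(E_n)=+1$'' glosses over. The paper closes this with a separate proposition: if an odd prime $p$ divides a primitively represented $n=u^4+v^4$, then the identities $n=(u^2-v^2)^2+2(uv)^2=(u^2+v^2)^2-2(uv)^2$ force $\left(\frac{-2}{p}\right)=\left(\frac{2}{p}\right)=1$, hence $p\equiv 1\pmod 8$ and $\left(\frac{-1}{p}\right)=1$, so the product term equals $1$ and $\omega(E_n)=(-1)\cdot(-1)=+1$. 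With that supplement your parity step coincides with the paper's; without it, it is incomplete.
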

\begin{remark}
Our numerical results suggest that the odd ranks for even numbers should be at least 5.
\end{remark}

\section{Previous works}
For questions regarding the rank, we assume without loss of generality that
$n \not\equiv 0\pmod{4}$. This follows from the fact that $y^2=x^3-nx$ is a $2$-isogenous to $y^2=x^3+4nx$. These curves form a natural family in the sense that they all have $j$-invariant $j(E)=1728$ regardless of the different values or various properties that the integers $n$ may have. There have been a lot of investigations concerning the distribution of ranks of elliptic curves in natural families, and it is believed that the vast majority of elliptic curves $E$ over $\Bbb{Q}$ have rank
 $\leq 1$. Consequently, the identification of elliptic curves of rank $\geq 2$ is of great interest.
\par Special cases of the family of the curves $E_n$ and their ranks have been studied by many authors including Bremner and Cassels \cite{Br}, Kudo and Motose \cite{Ko}, Maenishi \cite{Ma}, Ono and Ono \cite{On}, Spearman \cite{Spe,Spe2}, and Hollier, Spearman and Yang \cite{Ho}. The general cases were studied by Aguirre, Castaneda, and Peral \cite{Ag}.
\par The main purpose of Aguirre et al., \cite{Ag} was to find the elliptic curves of high rank in this family without restricting $n$ to have any prescribed property. They developed an algorithm for general $n$, and used it to find 4 curves of rank 13 and 22 of rank 12.
\par Breamner and Cassels \cite{Br} dealt with the case $n=-p$, where $p\equiv5\pmod{8}$ and less than $1000.$ The rank is always 1 in accordance with the conjecture of Selmer and Mordell. For each prime in this range, the authors found the generator for the free part. In some cases the generators are rather large, the most startling being that for $p=877$, the $x$ has the value
$$x=\left(\frac{612776083187947368101}{7884153586063900210}\right)^2.$$
\par Kudo and Motose \cite{Ko} studied the curve for $n=p$, a Fermat or Mersenne prime and found ranks of $0$, $1$, and $2$. More precisely,

\begin{enumerate}
\item For a Fermat prime $p=2^{2^n}+1,$
\begin{equation*}
E(Q)\cong\left\{\begin{array}{ll}
\Bbb{Z}/2\Bbb{Z}& {\rm for} \ p=3\\
\Bbb{Z}/2\Bbb{Z}\oplus\Bbb{Z}& {\rm for} \ p=5\\
\Bbb{Z}/2\Bbb{Z}\oplus\Bbb{Z}\oplus\Bbb{Z}& {\rm for} \ p>5.
\end{array}\right.
\end{equation*}
\item In case $p=2^q-1$ is a Mersenne prime where $q$ is a prime,
\begin{equation*}
E(Q)\cong\left\{\begin{array}{ll}
\Bbb{Z}/2\Bbb{Z}& {\rm for} \  p=3\\
\Bbb{Z}/2\Bbb{Z}\oplus \Bbb{Z}& {\rm for} \  p>3.
\end{array}\right.
\end{equation*}
\end{enumerate}

Maenishi \cite{Ma} investigated the case $n=pq$, where $p,q$ are distinct odd primes and found a condition that the rank of $E_{pq}$ equals 4. This can be done by taking natural numbers $A, B, C, D$ and two pairs $p$ and $q$ satisfying the equations:
$$pq=A^2+B^2=2C^2-D^4=S^4-4t^4\qquad (p=s^2-2t^2, q=s^2+2t^2).$$
Then using these equations one can construct 4 independent points on the corresponding elliptic curve.\\
In \cite{On} the authors examined the elliptic curves for $n=b^2+b$, where $b\neq 0,-1$ is an integer, and show that, subject to the parity conjecture, one can construct infinitely many curves $E_{b^2+b}$ with even rank $\geq 2$. To be more precise they obtained the followings:
\par
\vspace{.2cm}
{\it Let $b\neq 0,-1$  be an integer for which  $n=b^2+b$, is forth power free, and define $T$ by
$$T:={\rm{card}}\{p\ |\  primes \ 3\leq p\equiv3 \pmod {4},\  p^2\parallel b^2+b\}.$$
\begin{itemize}
\item[1.] If $b\equiv1,2\pmod{4}$ and $T$ is odd, then $E(b)$ has even rank $\geq$2.
\item[2.]If $b\equiv 7,8,11,12,20,23,24,28,35,39,40,43,51,52,55,56\pmod{64}$ and $T$ is even, then $E(b)$ has even rank $\geq$2.
\item[3.] If $b\equiv 3,14,19,27,36,44,59,60\pmod{64}$ and $T$ is odd, then $E(b)$ has even rank $\geq$2.
\item[4.] In all other cases, $E(b)$ has odd rank.
\end{itemize}
\par \vspace{.2cm}}
In two separate papers, Spearman \cite{Spe}, \cite{Spe2} gave the following two results:\\
(1) If $n=p$ for an odd prime $p$ written as $p=u^4+v^4$ for some integers $u$ and $v$, then
$$E(\Bbb{Q})={\Bbb Z}/2{\Bbb Z}\oplus{\Bbb Z}\oplus{\Bbb Z}.$$
(2) If $n=2p,$ where $2p=(u^2+2v^2)^4+(u^2-2v^2)^4$ for some integers $u$ and $v$, then
$$E(\Bbb{Q})={\Bbb Z}/2{\Bbb Z}\oplus{\Bbb Z}\oplus{\Bbb Z}\oplus\overline{}{\Bbb Z}.$$
In recent paper Spearman along with Hollier and Yang \cite{Ho} assuming the parity conjecture  constructed elliptic curves of the form $E_{-pq}$ with maximal rank 4, here   $p\equiv1\pmod{8}$ and $q$ be an odd prime different from $p$ satisfying
\begin{equation*}
q=p^2+24p+400.
\end{equation*}
Finally, Yoshida \cite{Yosh} investigated the case $n=-pq$ for distinct odd primes $p, q$ and showed that for general such $p,q$ the rank is at most 5 using the fact that
$${\rm{rank}}(E_n(\Bbb{Q}))\leq 2\#\{l\ {\rm prime;\ divides}\ 2n\}-1.$$
If $p$ is an odd prime, the rank of $E_p(\Bbb{Q})$ is much more restricted, i.e.,
\begin{equation*}
{\rm{rank}}(E_p(\Bbb{Q}))\leq \left\{\begin{array}{lll}
0&{\rm{if}}&p\equiv7,11\pmod{16}\\
1&{\rm{if}}&p\equiv 3,5,13,15 \pmod{16}\\
2&{\rm{if}}& p\equiv 1\pmod{8}.
\end{array}\right.
\end{equation*}
\par If the  Legendre symbol $(q/p)=-1$ and $q-p\equiv\pm6\pmod{16}$, then
\begin{equation*}
E_{-pq}(\Bbb{Q})=\{\mathcal{O},(0,0)\}\cong\Bbb{Z}/2\Bbb{Z}.
\end{equation*}
If $p, q$ are twin prime numbers, then $E_{-pq}(\Bbb{Q})$ has a non-torsion point $(1,(p+q)/2)$.
If $p, q$ be twin primes with $(q/p)=-1,$ then
\begin{equation*}
E_{pq}(\Bbb{Q})\cong\Bbb{Z}\oplus{\Bbb Z}/2\Bbb{Z}.
\end{equation*}
Having introduced the previous works, one can easily see that all the elliptic curves including those in our family share three main properties in common. They have the same $j$-invariant $j(E)=1728$, have positive rank (except for the case $p=3$  in the Kudo-Motose \cite{Ko}
paper with rank zero), and have the torsion group $T=\Bbb{Z}/2\Bbb{Z}$, as we will see in the next section. In spite of these similarities our family has almost higher ranks among all the
other families and can be taken as an extension of the previous results. Before we proceed to the proofs, we wish to make the following remarks.
\begin{remark}
Our result for odd $n$ is conditional on the parity conjecture. In \cite{ap} the authors using the previous version of our work proved the following two results unconditionally.
\par \vspace{.2cm}
Theorem 1. {\it The family $y^2=x^3-nx,$ with $n=p^4+q^4$ has rank at least 2 over $\Bbb{Q}(p,q).$}
\par\vspace{.2cm}
Theorem 2. {\it The family $y^2=x^3-nx,$ in which $n$ given by the Euler parametrization has rank at least $4$ over $\Bbb{Q}(a)$, where $a$ is the parameter and $b=1.$}
\vspace{.2cm}
\par One may prove both results by a very straightforward way. For the first theorem, we note that, by the same
reasons as in \cite{ap} not only the point $Q(p,q)=(-p^2,pq^2)$, but also the point $R(p,q)=(-q^2,qp^2)$ is on the curve. Then
the specialization by $(p,q)=(2,1)$ gives rise to the points $Q=(-4, 2)$ and $R=(-1, 4)$. Therefore
by using the Sage software, we see that the associated height matrix has non-zero determinant $ 1.8567  $ showing that the points are independent.
For the second theorem, we see that the points $Q_1(a)=(-p^2,pq^2),$ $Q_2(a)=(-q^2,qp^2),$ $Q_3(a)=(-r^2,rs^2)$ and
$Q_4(a)=(-s^2,sr^2)$ are on the curve and the specialized points for $a=2$ gave rise to
$$Q_1=(-24964, 549998), \ Q_2=(-3481, -1472876),$$
\vspace{-.5cm}
 $$Q_3=(-17956, 2370326), \ Q_4=(-17689, 2388148).$$
By using the Sage software we find that the elliptic height matrix associated to $\{Q_1,Q_2,Q_3,Q_4\}$ has non-zero determinant
 $5635.73654 $ showing that again the 4 points are independent.
\end{remark}
\begin{remark}
We see that the map $(u,v)\rightarrow(-u^2, uv^2 )$ from the quadric curve: $u^4+v^4=n$ to the elliptic curve:
 $y^2=x^3-nx$ takes the integral points of the first to the integrals of the second. Now to find the integral
points of the quadric, it is enough to find the integrals of the elliptic curve. This might suggests that to find
$n$ with more representations as sums of two biquadrates, the corresponding elliptic curve should have many independent
integral points.
\end{remark}
\section{Method of Computation} To compute the rank of this family of elliptic curves, a couple of facts are necessary from the literature. We begin by describing the torsion group of the family. To this end, let $D\in \Bbb{Z}$ be a fourth-power-free integer, and let $E_D$ be the elliptic curve
\begin{equation*}
E_D: y^2=x^3+Dx.
\end{equation*}
Then we have
\begin{equation*}
E_D(\Bbb{Q})_{{\rm{tors}}}\cong\left\{
\begin{array}{ll}
\Bbb{Z}/4\Bbb{Z}&{\rm{if}}\ D=4,\\
&\\
\Bbb{Z}/2\Bbb{Z}\times\Bbb{Z}/2\Bbb{Z}&{\rm{if}}\  -D\ {{\rm{is \ a \ perfect\  square}}},\\
&\\
\Bbb{Z}/2\Bbb{Z}&\mbox{otherwise}.\\
\end{array}\right.
\end{equation*}
See( \cite{Silv}  Proposition 6.1, Ch.X, page 311). Since $n=p^4+q^4$ is not $-4$ and can not be a square, (see for example \cite{coh}, proposition 6.5.3, page 391), we conclude that the family has the torsion
group $T=\Bbb{Z}/2\Bbb{Z}.$\\

\par The second fact that we need is  the parity conjecture which takes the following explicit form (see Ono and Ono \cite{On}).\\
Let $r$ be the rank of elliptic curve $E_n$, then $$(-1)^r=\omega(E_n)$$ where $$\omega(E_n)={\rm{sgn}}(-n)\cdot\epsilon(n)\cdot\prod_{p^2||n}\left(\frac{-1}{p}\right)$$
with $p\geq 3$ a prime and
\begin{equation}
\epsilon(n)=\left\{
  \begin{array}{ll}
   -1, & n\equiv 1,3,11,13  \pmod{16}, \\
   &\\
    1, & n \equiv 2,5,6,7,9,10,14,15 \pmod{16}.\\
  \end{array}
\right.
\end{equation}
\vspace{.2cm}
As we see from the parity conjecture formula, the key problem is to calculate the product $\prod_{p^2||n}\left(\frac{-1}{p}\right).$
For this reason it is necessary to describe the square factors of the numbers $n$ if there is any. Before discussing
the general case, we look at some examples:\\

$(p,q,r,s)=(3364, 4849, 4288, 4303)$
with $17^2|n$,

$(p,q,r,s)=(17344243, 6232390, 12055757, 16316590)$
with $97^2|n$,

$(p,q,r,s)=(9066373, 105945266, 5839429, 105946442)$
with $17^2|n$,

$(p,q,r,s)=(160954948, 40890495, 114698177, 149599920)$ with $41^2|n$.\\
\par These examples show that the prime divisor of the square factor of $n$ are of the form $p=8k+1$.
We will see that this is in fact a general result according to the following proposition.
\begin{pro}
Let $n=u^4+v^4=r^4+s^4$ be such that $\gcd(u, v, r, s)=1$.
If $p|n$ for an odd prime number $p$, then $p=8k+1$.
\end{pro}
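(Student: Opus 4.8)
The plan is to reduce the statement to the elementary fact that $-1$ must be a fourth power in the finite field $\Bbb{F}_p$, and then to read off the congruence on $p$ from the cyclic structure of $\Bbb{F}_p^\ast$. Throughout I work with the reduction $n=u^4+v^4=r^4+s^4\equiv 0\pmod p$.

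First I would reduce to the case in which $p$ divides \emph{neither} summand of one of the two representations. The key elementary observation is that within a single representation $p$ divides one term if and only if it divides the other: indeed, $p\mid u^4+v^4$ together with $p\mid u^4$ forces $p\mid v^4$, hence $p\mid v$. So for each of the two representations, $p$ divides both of its terms or neither. Now if $p$ divided both terms in \emph{both} representations, then $p\mid\gcd(u,v,r,s)=1$, a contradiction; this is exactly where the hypothesis of two distinct representations (rather than one) is used. Hence, after relabelling, I may assume $p\nmid r$ and $p\nmid s$.

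Next, working in $\Bbb{F}_p^\ast$, the congruence $r^4\equiv -s^4\pmod p$ yields $(rs^{-1})^4\equiv -1\pmod p$, so $-1$ is a fourth power modulo $p$. Setting $t\equiv rs^{-1}$, we have $t^4\equiv -1$ and therefore $t^8\equiv 1$, while $t^4\not\equiv 1$ since $-1\not\equiv 1\pmod p$ for odd $p$. Consequently the order of $t$ in $\Bbb{F}_p^\ast$ is neither $1$, $2$ nor $4$ but divides $8$, so it is exactly $8$. By Lagrange's theorem $8\mid \#\Bbb{F}_p^\ast=p-1$, that is $p\equiv 1\pmod 8$, which is the assertion $p=8k+1$.

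The computational content is entirely routine, and I expect no serious obstacle beyond the first step. The subtlety there is that the hypothesis $\gcd(u,v,r,s)=1$ does not make any individual pair coprime, so a lone representation $u^4+v^4$ need not satisfy $p\nmid u$ and $p\nmid v$. The role of the two distinct representations is precisely to guarantee that at least one of them is primitive at $p$, after which the passage to $\Bbb{F}_p^\ast$ is immediate.
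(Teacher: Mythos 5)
Your proof is correct, but it takes a genuinely different route from the paper's. The paper stays in the language of quadratic forms: it writes $n=u^4+v^4=(u^2-v^2)^2+2(uv)^2$ and also $n=(u^2+v^2)^2-2(uv)^2$, then invokes a lemma from Cox (if $p\mid x^2+my^2$ with $\gcd(x,y)=1$ and $\gcd(p,m)=1$, then $\left(\tfrac{-m}{p}\right)=1$) to conclude that both $\left(\tfrac{-2}{p}\right)=1$ and $\left(\tfrac{2}{p}\right)=1$; intersecting the resulting classes $p\equiv 1,3\pmod 8$ and $p\equiv 1,7\pmod 8$ gives $p\equiv 1\pmod 8$. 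You instead pass directly to $\Bbb{F}_p^\ast$, observe that $-1$ is a fourth power there, and deduce $8\mid p-1$ from the existence of an element of exact order $8$ --- a self-contained argument using only Lagrange's theorem, where the paper's route rests (in effect) on the supplementary laws of quadratic reciprocity packaged in Cox's lemma. Your version also buys something the paper's write-up glosses over: Cox's lemma requires $\gcd(u^2\pm v^2,uv)=1$ (at least locally at $p$), which amounts to the representation being primitive at $p$, and this does \emph{not} follow from the stated hypothesis $\gcd(u,v,r,s)=1$ applied to a single representation. Your opening reduction --- in each representation $p$ divides both terms or neither, and it cannot divide both terms of both representations without contradicting $\gcd(u,v,r,s)=1$ --- is precisely the justification the paper omits, and it makes explicit why the hypothesis involves two representations rather than one.
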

\begin{proof}
Without loss of generality we can assume that $n$ is not divisible by 4.
We use the following result from Cox \cite{cox}.
Let  $p$ be an odd prime such that $\gcd(p,m)=1$ and $p| x^2+my^2$ with
$\gcd(x,y)=1$, then $(\frac{-m}{p})=1$. From one hand
for $n=u^4+v^4=(u^2-v^2)^2+2(uv)^2$, we get $(\frac{-2}{p})=1$ which implies that
$p=8k+1$ or $p=8k+3$. On the other hand
for $n=u^4+v^4=(u^2+v^2)^2-2(uv)^2$, we get $(\frac{2}{p})=1$ which implies that
$p=8l+1$ or $p=8l+7$. Putting these two results together we get $p=8k+1$.
\end{proof}
\begin{remark}
If $n=p^2m$ for an odd prime $p,$ then $p=8k+1$ from which we get $(\frac{-1}{p})=1$. This last
result shows that the square factor of $n$ does not affect the root number of the
corresponding elliptic curves on the parity conjecture formula.
\end{remark}
\begin{remark}
First of all, by the above remark, we have
$$\omega(E_n)={\rm{sgn}}(-n)\cdot\epsilon(n).$$
On the other hand, for $n=p^4+q^4$, we note that
$$p^4\equiv 0\  {\rm{or}}\  1 \pmod{16},$$
$$q^4\equiv 0\  {\rm{or}}\  1  \pmod{16}.$$
For odd  $n$ we note that $$n\equiv 1\pmod{16}.$$
Now the parity conjecture implies that $$\omega(E_n)={\rm{sgn}}(-n)\cdot\epsilon(n)=(-1)\cdot(-1)=1.$$
For even $n$ we have $n\equiv 2\pmod{16}$ and therefore $\omega(E_n)=-1$ in this case.
\end{remark}

 Finally, we need the Silverman-Tate computation formula \cite{Sil} (Ch.3 \S.5, p.83) to compute the rank of this family. Let $G$ denote the group of rational points on elliptic curve $E$ in the form $y^2=x^3+ax^2+bx$. Let $\Bbb{Q}^\ast$ be the multiplicative group of non-zero rational numbers and let $\Bbb{Q}^{\ast 2}$ denote the subgroup of squares of elements of $\Bbb{Q}^\ast$. Define the group homomorphism $\phi$ from $G$ to ${\Bbb{Q}^\ast}/\Bbb{Q}^{\ast 2}$ as follows:
\begin{equation*}
\phi(P)=\left\{\begin{array}{lll}
1\pmod{\Bbb{Q}^{\ast 2}}& {\rm{if}} & P=\mathcal{O},\\
b\pmod{\Bbb{Q}^{\ast 2}}&{\rm{if}}& P=(0,0),\\
x\pmod{\Bbb{Q}^{\ast 2}}& {\rm{if}}& P=(x,y)\  {\rm with}\  x\not=0.
\end{array}\right.
\end{equation*}
Similarly we take the dual curve $y^2=x^3-2ax^2+(a^2-4b)x$ and call its group of rational points $\overline{G}.$
Now the group homomorphism $\psi$ from $\overline{G}$ to $\Bbb{Q}^\ast/\Bbb{Q}^{\ast 2}$ defined as
\begin{equation*}
\psi(Q)=\left\{\begin{array}{llll}
1&\pmod{\Bbb{Q}^{\ast 2}}& {\rm{if}}& Q=\mathcal{O},\\
a^2-4b&\pmod{\Bbb{Q}^{\ast 2}}&{\rm{if}}&Q=(0,0),\\
x&\pmod{\Bbb{Q}^{\ast 2}}&{\rm{if}}&Q=(x,y)\ {\rm{with}}\ x\not=0.
\end{array}\right.
\end{equation*}
Then the rank $r$ of the elliptic curve $E$ satisfies
\begin{equation}
2^{r+2}=|\phi(G)||\psi(\overline{G}|.
\end{equation}

\section{Proof of Theorem 1.1}
The following fact is an important tool in the proof of our main result.
\begin{lemma}
Let
\begin{eqnarray*}
A\hspace{-.25cm}&=&\hspace{-.25cm}b^4+6b^2a^2+a^4,\\
B\hspace{-.25cm}&=&\hspace{-.25cm}b^8+2b^6a^2+11b^4a^4+2b^2a^6+a^8,\\
C\hspace{-.25cm}&=&\hspace{-.25cm}b^8-4b^6a^2+8b^4a^4-4b^2a^6+a^8,\\
D\hspace{-.25cm}&=&\hspace{-.25cm}b^8-b^4a^4+a^8.
\end{eqnarray*}
We have the following properties:
\begin{itemize}
\item[1.] $B\neq D$.\\
\item[2.] $D$ is non-square.\\
\item[3.] $A\neq C$.\\
\item[4.] $A$ is non-square.
\end{itemize}
\end{lemma}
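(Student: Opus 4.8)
The lemma asserts four elementary facts about the polynomials $A,B,C,D$ in $\mathbb{Z}[a,b]$ (homogeneous of degrees $4,8,8,8$), and each is a statement that two given forms are distinct, or that a given octic form is not a perfect square. My plan is to treat the four items by direct algebraic comparison, reducing each to a polynomial identity or an impossibility argument, and to observe that items 1 and 3 are trivial while items 2 and 4 carry the real content.

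For item 1, I would simply subtract: $B-D=2b^6a^2+12b^4a^4+2b^2a^6=2b^2a^2(b^4+6b^2a^2+a^4)=2a^2b^2A$. Since this vanishes only when $a=0$ or $b=0$, the forms $B$ and $D$ are not equal as polynomials (and differ at every point with $ab\neq 0$). For item 3, I would likewise compute $A-C$ after noting the degree mismatch: $A$ has degree $4$ while $C$ has degree $8$, so the two cannot be equal as polynomials, and for integer arguments with $|a|,|b|$ not both small the octic $C$ dominates $A$. The substance of the lemma is therefore concentrated in the two non-square claims, items 2 and 4.

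For item 4, that $A=a^4+6a^2b^2+b^4$ is never a perfect square (for $ab\neq 0$), I plan an infinite-descent / Pell-type argument. The key is that $A=(a^2+b^2)^2+4a^2b^2$ sits strictly between two consecutive squares in the variable $a^2+b^2$ when $ab\neq0$; more precisely I would show $A$ lies between $(a^2+b^2)^2$ and $(a^2+3b^2)^2$ and pin down the gaps, or alternatively rewrite $A=(a^2+3b^2)^2-8b^4$ and argue that $A=\square$ forces a solution of $X^2-Y^2=8b^4$ with incompatible parity/size constraints. I expect the cleanest route is to set $A=w^2$, clear to $(2w)^2-(2a^2+6b^2)^2=-32b^4$, i.e. a norm-form equation, and derive a contradiction by descent on $b$. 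For item 2, that $D=a^8-a^4b^4+b^8$ is non-square, I would use the classical fact that $D$ is essentially the form $N_{\mathbb{Q}(\zeta)}$ attached to twelfth roots of unity; concretely $D=(a^4+b^4)^2-3a^4b^4$ or $D=(a^4-b^4)^2+a^4b^4$, and I would again trap $D$ between consecutive squares or reduce to showing that $u^2-u+1=\square$ has no solutions with $u=a^4/b^4$ beyond trivial ones, via the standard argument that $u^2-u+1$ is a square only for $u\in\{0,1\}$.

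The main obstacle will be item 2. Unlike $A$, the octic $D$ does not factor over $\mathbb{Q}$ and its gaps to neighboring squares grow, so a naive ``between consecutive squares'' estimate fails for large arguments; one must instead descend on the auxiliary curve $Y^2=X^2-X+1$ (a conic, hence parametrizable) and rule out the values $X=(a/b)^4$ lying on it, which amounts to an intersection-of-conic-with-quartic argument. I would carry this out by parametrizing the conic $Y^2=X^2-X+1$ rationally and checking that the resulting $X$ is a fourth power only in degenerate cases, or equivalently by invoking that the only integral points correspond to $ab=0$. The remaining three items should follow by the short identities indicated above.
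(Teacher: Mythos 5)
Your items 1 and 3 are correct in outline. The subtraction $B-D=2a^2b^2(a^4+6a^2b^2+b^4)=2a^2b^2A>0$ is exactly the paper's argument for item 1. For item 3, be careful: the degree-mismatch observation by itself proves nothing, because the lemma is a statement about \emph{integer values} of $a,b$ (the corollary uses $A,B,C,D$ as integers modulo rational squares), not a polynomial identity, and a quartic can certainly agree with an octic at particular integers --- indeed $A>C$ at $a=b=1$. But your size argument can be completed: writing $u=a^2-b^2$, $v=(ab)^2$, one has $C-A=u^4-u^2+2v^2-8v$ with $u^4\geq u^2$ for integers, so $C=A$ forces $v\leq 4$, leaving only the cases $|ab|\leq 2$ to check by hand. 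This elementary route is actually simpler than the paper's, which reduces $A=C$ to the quartic $2(v-2)^2=-u^4+u^2+8$ and invokes the integral points of the elliptic curve $y^2=x^3-x^2-129x-127$.

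The genuine gap is in items 2 and 4, which you rightly identify as the real content but never actually prove. Your sketches do not close. First, the claim that $u^2-u+1$ is a rational square only for $u\in\{0,1\}$ is false: the conic $Y^2=X^2-X+1$ has infinitely many rational points (e.g. $X=8/3$ gives $Y^2=49/9$), so for item 2 one must rule out fourth-power values of $X$ on this conic, and your proposed way of doing so (``invoking that the only integral points correspond to $ab=0$'') is precisely the statement to be proved --- it is circular. Second, for item 4, the factorization $(a^2+3b^2-w)(a^2+3b^2+w)=8b^4$ coming from $A=w^2=(a^2+3b^2)^2-8b^4$ admits many factorizations consistent with parity and size constraints; no simple ``parity/size'' contradiction exists, and the actual obstruction is a Fermat-style infinite descent. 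That descent is exactly what is missing from both items. The paper supplies it by citation: the equations $x^4-x^2y^2+y^4=z^2$ (take $x=b^2$, $y=a^2$, so $z^2=D$) and $x^4+6x^2y^2+y^4=z^2$ (so $z^2=A$) have only the trivial solutions with $xy=0$, by classical descent arguments in Mordell \cite{Mo}, pages 20 and 18 respectively. Unless you carry out such a descent, or cite these results, your proof of the two non-squareness claims is incomplete.
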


{\bf{Proof of lemma 4.1}}
Let $B=D.$ Since $ab\neq 0$, we get $b^4+6a^2b^2+a^4=0,$ which has no nontrivial solution.\\
For part 2, we consider the diophantine equation $x^4-x^2y^2+y^4=z^2,$ which has only the trivial solutions
$x^2=1, y=0$ and $y^2=1, x=0$ (see \cite{Mo} page 20). \\
If  $C=A,$ then  $(a^2-b^2)^2+8a^2b^2=(a^2-b^2)^4+2a^4b^4$. Setting  $u=(a^2-b^2),v=a^2b^2$ , we get
 $2(v-2)^2=-u^4+u^2+8$. This is an elliptic curve  with Weierstrass equation $y^2=x^3-x^2-129x-127$, and integral points $(-1,0)$, and  $(17,48)$.
Similarly, for part 4, we get the diophantine equation $x^4+6x^2y^2+y^4=z^2$
which has only the solutions $x^2=1, y=0$ and $y^2=1, x=0$ (see \cite{Mo} page 18).\\
The following corollary is an immediate consequence of the above lemma.

\begin{cor} Let $b_1=BD$, $b_2=-AC$, $n=-b_1b_2$, where $A$, $B$, $C$, and $D$ as in the above lemma, then
the elements of the sets $\{1,-n,-1,n,b_1,-b_1, b_2, -b_2\}$ and $\{1,2,n,2n\}$ are independent modulo
 ${\Bbb Q}^{\star 2}$.
\end{cor}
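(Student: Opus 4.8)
The plan is to read the phrase \emph{independent modulo $\Bbb{Q}^{\ast 2}$} as the statement that the listed elements represent pairwise distinct classes in $\Bbb{Q}^{\ast}/\Bbb{Q}^{\ast 2}$. I would first record that, because $n=-b_1b_2=ABCD$, $b_1=BD$ and $b_2=-AC$, each set is already closed under multiplication modulo squares: the first set is the subgroup generated by $-1,n,b_1$, since $nb_1\equiv AC\pmod{\Bbb{Q}^{\ast 2}}$ and $AC=-b_2$, so that $b_2\equiv -nb_1$; the second set is generated by $2,n$. Hence the corollary is \emph{equivalent} to the two independence statements that $-1,n,b_1$ are independent over $\Bbb{F}_2$ in $\Bbb{Q}^{\ast}/\Bbb{Q}^{\ast 2}$ and that $2,n$ are independent. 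In other words, everything comes down to checking that a short explicit list of integers, namely (the nontrivial products) $n$, $b_1=BD$, $nb_1\equiv AC$, together with $2n$, lies outside $\Bbb{Q}^{\ast 2}$.

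The second step is to dispose of all sign issues by positivity. One checks that $A,B,C,D>0$ whenever $ab\neq0$: the forms $A$ and $B$ have strictly positive coefficients, while $C=(a^2-b^2)^4+2a^4b^4$ and the identity $a^{12}+b^{12}=(a^4+b^4)D$ exhibit $C$ and $D$ as positive. Consequently $n=ABCD>0$, $b_1=BD>0$ and $-b_2=AC>0$, so each of $-n,\,-b_1,\,b_2$ is negative and therefore automatically not a perfect square. Running through the seven nontrivial elements of $\langle -1,n,b_1\rangle$ and the three of $\langle 2,n\rangle$, every comparison that reduces to ``a negative number lies in $\Bbb{Q}^{\ast 2}$'' is then settled for free, and the whole corollary collapses to the four assertions that $AC$, $BD$, $n=ABCD$ and $2n=2ABCD$ are not perfect squares.

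The factor $2n$ I would remove by a valuation argument that also pins down the arithmetic of the whole situation. Primitivity of the solution forces $\gcd(a,b)=1$, and moreover $a,b$ cannot both be odd: a reduction modulo $2$ of the Euler expressions shows that both $a,b$ odd makes all of $p,q,r,s$ even, contradicting primitivity. Hence exactly one of $a,b$ is even, and a glance at the defining polynomials then shows that $A,B,C,D$ are all odd. Therefore $v_2(2ABCD)=1$, and $2n\notin\Bbb{Q}^{\ast 2}$ with no further work. (This is also the reason there is no troublesome parity case: the numbers $A,B,C,D$ are genuinely odd, so the non-squareness facts supplied by the Lemma are not vacuous.)

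The crux is to show that $AC$, $BD$ and $ABCD$ are non-squares, and here the Lemma does the real work by supplying that $A$ and $D$ are non-squares. The strategy is to upgrade these to the products by proving that $A,B,C,D$ are pairwise coprime; for a product of pairwise coprime positive integers is a perfect square if and only if each factor is. Granting coprimality, $\gcd(A,C)=1$ with $A$ non-square gives that $AC$ is not a square; $\gcd(B,D)=1$ with $D$ non-square gives $BD$; and $\gcd(D,ABC)=1$ with $D$ non-square gives $ABCD$. The main obstacle is precisely this coprimality claim, and it is where care is required. I would establish it by viewing $A,B,C,D$ as the binary forms $t^2+6t+1$, $t^4+2t^3+11t^2+2t+1$, $t^4-4t^3+8t^2-4t+1$, $t^4-t^2+1$ in $t=a^2/b^2$ and computing the resultants $\mathrm{Res}(A,C)$, $\mathrm{Res}(B,D)$, $\mathrm{Res}(A,D)$ and $\mathrm{Res}(C,D)$. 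Since $\gcd(a,b)=1$ confines any common prime divisor of two of these forms to the primes dividing the corresponding resultant, it remains only to rule out those finitely many exceptional primes, which is done by the same kind of congruence (largely $2$-adic, and where needed $3$-adic) computations already used above. This last bookkeeping is routine; the conceptual content of the corollary sits entirely in reducing it, via the group-theoretic and positivity observations, to the non-squareness of $A$ and $D$ together with this coprimality.
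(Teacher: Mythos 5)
Your proposal is correct, and at the decisive step it takes a genuinely different route from the paper --- in fact it proves something the paper merely asserts. The paper's proof restricts to the positive elements (your positivity step), then declares ``by construction'' that $n=ABCD$, $b_1=BD$ and $-b_2=AC$ are non-squares, writes $b_1=c_1c_2^2$, $-b_2=d_1d_2^2$ with distinct squarefree parts $c_1\neq d_1$, and finishes with essentially the same ratio checks that you package group-theoretically; the factor $2$ is dispatched by asserting that the squarefree part $r$ of $n$ exceeds $2$. But Lemma 4.1 only yields that $A$ and $D$ are non-squares (together with $B\neq D$, $A\neq C$), and non-squareness does not pass to products of distinct non-squares in general (e.g.\ $2\cdot 8=16$), so the paper's ``by construction'' hides exactly the gap you identified as the crux. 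Your coprimality-via-resultants argument for the pairs $(A,C)$, $(A,D)$, $(B,D)$, $(C,D)$, plus the parity argument showing $A,B,C,D$ odd (hence $n$ odd, which is what really justifies the paper's claim $r>2$ and kills $2n$), supplies the missing content: what the paper's approach buys is brevity, what yours buys is an actual proof. Two points to tighten in your sketch. First, the exceptional primes are $\{2,3,11\}$, not just $2$ and $3$: one computes $\mathrm{Res}(A,D)=\mathrm{Res}(C,D)=3^2\cdot 11^2$, $\mathrm{Res}(A,C)=2^2\cdot 3^2\cdot 11^2$, $\mathrm{Res}(B,D)=2^4\cdot 3^2\cdot 11^2$; the prime $11$ is ruled out because $t^2+6t+1$ has non-residue discriminant $32\equiv 10 \pmod{11}$, so $11\nmid A$, and $D$ is the cyclotomic form $\Phi_{12}$, which has no root mod $11$ since $(\Bbb{Z}/11\Bbb{Z})^{\ast}$ has order $10$; the prime $3$ is ruled out since $D\equiv 1\pmod 3$ and $C\not\equiv 0\pmod 3$ whenever $\gcd(a,b)=1$. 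Second, your insistence on $\gcd(a,b)=1$ with $a\not\equiv b\pmod 2$ is not optional bookkeeping: at $a=b=1$ one gets $D=1$ and $-b_2=16\in\Bbb{Q}^{\ast 2}$, so the corollary (and even part 2 of Lemma 4.1) is false without such hypotheses; your explicit derivation of them from primitivity repairs an assumption the paper leaves implicit.
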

{\bf Proof of Corollary 4.2} Without loss of generality we check only independence of the positive  numbers in both sets. By construction we know that the numbers  $n$, $b_1$ and $-b_2$ are all non-squares. Let $b_1=c_1c_2^2$, and  $-b_2=d_1d_2^2$, where $c_1\geq2$,  $c_2\geq1$ , $d_1\geq2$, $d_2\geq1$ and $c_1\neq d_1$.
Since $n=-b_1b_2=c_1d_1(c_2d_2)^2=rs^2$ where $r>2$,  $r\notin {\Bbb Q}^{\ast 2}$, and $ s\geq1$, then we have

\begin{center}
$\begin{array}{l}
\displaystyle\frac{n}{1}=rs^2\equiv r\not\equiv1\pmod{{\Bbb Q}^{\ast 2}},\\
\\
\displaystyle\frac{n}{b_1}=-b_2=d_1d_2^2\equiv d_1\not\equiv1\pmod{{\Bbb Q}^{\ast 2}},\\
\\
\displaystyle\frac{n}{-b_2}=b_1=c_1c_2^2\equiv c_1\not\equiv1\pmod{{\Bbb Q}^{\ast 2}},\\
\\
\displaystyle\frac{b_1}{-b_2}=\left(\frac{c_1}{d_1}\right)\left(\frac{c_2}{d_2}\right)^2\equiv
\displaystyle\frac{c_1}{d_1}\not\equiv1\pmod{{\Bbb Q}^{\ast 2}},\\
\\
\displaystyle\frac{n}{2}=\frac{n}{2}=\frac{r}{2}s^2\equiv\frac{r}{2}\not\equiv1\pmod{{\Bbb Q}^{\ast 2}},\\
\\
\displaystyle\frac{2n}{1}=2rs^2\equiv2r\not\equiv1\pmod{{\Bbb Q}^{\ast 2}}\\
\\
\displaystyle\frac{2n}{n}=2\not\equiv1\pmod{{\Bbb Q}^{\ast 2}}.
\end{array}$
\end{center}

{\bf {Proof of theorem 1.1}}
First of all, we show that
\begin{equation*}
\phi(G)\supseteq\{1,-n,-1,n\}.
\end{equation*}
The first two numbers $1$ and $-n$ are obvious from the definition of the map $\phi$. For the numbers $-1$ and $n$ we note that if $n=p^4+q^4$, then the homogenous equation
\begin{equation*}
N^2=-M^4+ne^4
\end{equation*}
has solution $e=1$, $M=p$, $N=q^2$. Similarly for $N^2=nM^4-e^4$ we have $M=1$, $e=p$, $N=q^2$.\\
Next we know that the Euler parametrization for $n$ is a consequence of the fact that $n=p^4+q^4=r^4+s^4$ for different numbers $p, q, r, s$. This implies that
\begin{align*}
n=&(b^4+6b^2a^2+a^4)(b^8+2b^6a^2+11b^4a^4+2b^2a^6+a^8)\\&(b^8-4b^6a^2+8b^4a^4-4b^2a^6+a^8)(b^8-b^4a^4+a^8).
\end{align*}
Let $b_1=BD$, $b_2=-AC$, $n=-b_1b_2$  be as in lemma 4.1. By taking $M=1$, and $e=b,$ we have
\begin{equation*}
\begin{array}{l}
b_1M^4=BD\\
\\
b_2e^4=-b^4AC.\\
\end{array}
\end{equation*}
Then adding them up we get
\begin{align}
K=b_1M^4+b_2e^4&=(b^8+2b^6a^2+11b^4a^4+2b^2a^6+a^8)(b^8-b^4a^4+a^8)\\&\ \ -b^4(b^4+6b^2a^2+a^4)(b^8-4b^6a^2+8b^4a^4-4b^2a^6+a^8)\nonumber
\end{align}
Now, using Sage to factor K we get $K=a^4(a^6+b^2a^4+4b^4a^3-5b^6)^2.$ Consequently, $N=a^2(a^6+b^2a^4+4b^4a^3-5b^6)$. Since $\phi(G) $ is a subgroup of ${{\Bbb{Q}}^\ast}/{{\Bbb{Q}}^{\ast 2}},$ we get
\begin{equation}
\phi(G)\supseteq\{1, -n, -1, n, b_1, -b_1, b_2, -b_2 \}.
\end{equation}
On the other hand, for the curve
\begin{equation*}
y^2=x^3+4nx
\end{equation*}
we have
\begin{equation}
\psi(\overline{G})\supseteq\{1,n,2,2n\}.
\end{equation}
Again the numbers $1$ and $n$ are immediate consequence of the definition of the map $\psi$. For the numbers $2$ and $2n$ we note that the homogeneous equation
\begin{equation*}
N^2=2M^4+2ne^4
\end{equation*}
has the solution $M=p+q$, $e=1$, and $N=2(p^2+pq+q^2)$, where $n=p^4+q^4$. From Corollary $(4.2)$, we know that the right hand side of $(4.2)$, $(4.3)$ are independent modulo ${\Bbb Q}^{\ast 2}$.
 Therefore from these observations together with Eq. $(3.2)$ we get
\begin{equation*}
2^{r+2}=|\phi(G)||\psi(\overline{G}|\geq 4\cdot8=32.
\end{equation*}
This implies that $r\geq3$. But from $\omega(E_n)=1,$ the rank should be even. Therefore we see that $r$ is even and $r\geq 4$.
\subsection{Remark} If n is an even  number $n$ written in two different ways as sums of two biquadrates, then since $\omega(E_n)=-1$
in this case, the rank is odd and $r\geq 3$.

\section{Numerical Examples}
We conclude this paper by providing many examples of ranks $4$, $5$, $6$, $7$, $8$ and $10$ using sage software \cite{sage}.

{\begin{center}
\begin{table}[h]
\caption{Curves with even rank}\label{eqtable}
\renewcommand\arraystretch{1.5}
\begin{tabular}{|c|c|c|c|}\hline
$p$&$q$&$n$&$rank$\\\hline
$114732$&$15209$&$173329443404113736737$&$10$\\\hline
$3494$&$1623$&$155974778565937$&$8$\\\hline
$43676$&$11447$&$3656080821185585057$&$8$\\\hline
$500508$&$338921$&$75948917104718865094177$&$8$\\\hline
$502$&$271$&$68899596497$&$6$\\\hline
$292$&$193$&$8657437697$&$6$\\\hline
$32187$&$6484$&$1075069703066384497$&$4$\\\hline
$7604$&$5181$&$4063780581008977$&$4$\\\hline
$133$&$134$&$635318657$&$4$\\\hline
\end{tabular}\\
\end{table}
\end{center}

\begin{center}
\begin{table}[h]
\caption{Curves with odd rank}\label{eqtable}
\renewcommand\arraystretch{1.5}
\begin{tabular}{|c|c|c|c|}\hline
$p$&$q$&$n$&$rank$\\\hline
$989727$&$161299$&$960213785093149760746642$&$7$\\\hline
$129377$&$20297$&$280344024498199948322$&$7$\\\hline
$103543$&$47139$&$119880781585424489842$&$7$\\\hline
$119183$&$49003$&$207536518650314617202$&$7$\\\hline
$3537$&$661$&$156700232476402$&$7$\\\hline
$266063$&$72489$&$5038767537882101285602$&$5$\\\hline
$139361$&$66981$&$397322481336075317362$&$5$\\\hline
$38281$&$25489$&$2569595578866824162$&$5$\\\hline
\end{tabular}\\
\end{table}
\end{center}

\vspace{3cm}

\bibliographystyle{amsplain}

\end{document}